\numberwithin{equation}{section}
\theoremstyle{plain}
\newtheorem{theorem}[equation]{Theorem}
\newtheorem{proposition}[equation]{Proposition}
\newtheorem{lemma}[equation]{Lemma}
\newtheorem{corollary}[equation]{Corollary}
\theoremstyle{definition}
\newtheorem{definition}[equation]{Definition}
\theoremstyle{remark}
\newtheorem{remark}[equation]{Remark}
\newcommand{\To}{\longrightarrow}
\def\C{\mathcal{C}}
\def\H{\mathcal{H}}
\def\D{\mathcal{D}}
\def\W{\mathcal{W}}
\def\F{\mathcal{F}}
\begin{document}
\title[\tiny{A note on model (co-)slice categories}]{A note on model (co-)slice categories}
\author [\tiny{Zhi-Wei Li}] {Zhi-Wei Li}
\date{\today}

\thanks{The author is supported by JSNU12XLR025.}

\subjclass[2010]{18A25, 18G55, 55U35, 18E30}
\date{\today}
\keywords{Model categories; Homotopy categories; Quillen equivalences; Triangle equivalences }%

\maketitle

\begin{abstract} \ There are various adjunctions between model (co-)slice categories. We characterize when these adjunctions are Quillen equivalences. As an application, a triangle equivalence between the stable category of a Frobenius category and the homotopy category of a non-pointed model category is given.
\end{abstract}

\setcounter{tocdepth}{1}

\section{Introduction}
 Given a category $\C$ and a morphism $f:X\to Y$ in $\C$, one can construct adjunctions between coslice categories $(X\downarrow \C)$ and $(Y\downarrow \C)$, and between slice categories $(\C\downarrow X)$ and $(\C\downarrow Y)$. If $\C$ is a closed model category, all these (co-)slice categories inherit a model structure from $\C$. In this case, the adjunctions between these (co-)slice categories are Quillen adjunctions. Meanwhile, if we start from an Quillen adjunction between two closed model categories $\C$ and $\D$, we also can construct Quillen adjunctions between some (co-)slice categories. This note is aimed to characterize when these adjunctions are Quillen equivalences.

 As an application of the characterizations, we construct a triangle equivalence between the stable category of a weakly idempotent complete Frobenius model category and the homotopy category of its coslice category; see Corollary 3.6. As a byproduct, we get a non-pointed model category whose homotopy category is a triangulated category; see Theorem 3.5. This shows that the pointed condition of Quilen's Theorem \cite[Chapter I, Theorem 2]{Quillen67} is not always necessary.

\section{Preliminaries of (co)slice categories and Quillen equivalences}
In this section we recall some basic facts about (co)slice categories and Quillen equivalences. Our main references are \cite[Chapter I]{Quillen67}, \cite{Dwyer/Spalinski95}, \cite[Chapter 1 ]{Hovey99} and \cite[Chapter 8]{Hirschhorn03}.

\subsection{Quillen equivalences}

In a closed model category $\C$ \cite[Definition I.5.1]{Quillen67}, there are three classes of morphisms, called {\it cofibrations, fibrations} and {\it weak equivalences}. We will denote them by $\C of(\C), \F ib(\C)$ and $\W e(\C)$, respectively. A morphism which is both a (co-)fibration and a weak equivalence is called {\it acyclic (co-)fibration}. An object $X\in \C$ is called {\it cofibrant} if $\emptyset\to A\in \C of(\C)$ and {\it fibrant} if $X\to *\in \F ib(\C)$, where $\emptyset$ is the initial object of $\C$ and $*$ the terminal object of $\C$. We use $\C_c$ and $\C_f$ to denote the classes of cofibrant and fibrant objects, respectively. An object in $\C_{cf}:=\C_c\cap \C_f$ is called {\it bifibrant}.

 Suppose $\C$ and $\D$ are closed model categories. An adjunction $F: \C\leftrightarrows \D : G$ is called a {\it Quillen adjunction} if $F$ preserves cofibrations and acyclic cofibrations or equivalently $G$ preserves fibrations and acyclic fibrations; see \cite[Definition 1.3.1, Lemma 1.3.4]{Hovey99}. Sometimes we will call $F$ {\it a left Quillen functor } and $G$ {\it a right Quillen functor}.
\vskip5pt
\begin{definition} \ \cite[Definition 1.3.12]{Hovey99} \ A Quillen adjunction $(F,G;\varphi): \C \to\D$ is called a {\it Quillen equivalence} if for all $X\in \C_c$
 and $Y\in \D_f$, a map $f: F(X)\to Y\in \mathcal{W}e(\D)$ if and only if $\varphi(f): X\to G(Y)\in \mathcal{W}e(\C)$.
 \end{definition}

If $(F,G)$ is a Quillen equivalence, then the left derived functor $\mathbb{L}F$ and the right derived functor $\mathbb{R}G$ exist; see \cite[Chapter I, Section 4]{Quillen67}. Furthermore, they induce an equivalent adjunction $(\mathbb{L}F, \mathbb{R}L):\H o(\C)\to \H o(\D)$ between the homotopy categories; see \cite[Chapter I, Theorem 3]{Quillen67}.

In a model category $\C$, we use $p_{_X}: Q(X)\to X$ to denote the cofibrant approximation of $X$ and  $r_{_X}:X\to R(X)$ the fibrant approximation of $X$, respectively; see \cite[Chapter I, Section 1]{Quillen67} or \cite[Section 5]{Dwyer/Spalinski95}.
The following is the most useful criterion for checking when a given Quillen adjunction is a Quillen equivalence.

\begin{proposition} \ \cite[Proposition 1.3.13, Corollary 1.3.16]{Hovey99} \ Suppose $(F,G,\varphi; \eta, \varepsilon):\C\to \D$ is a Quillen adjunction. The following are equivalent:
\vskip5pt
$(1)$ \ $(F, G, \varphi)$ is a Quillen equivalence.
\vskip5pt
$(2)$ \ $(\mathbb{L}F, \mathbb{R}G):\H o(\C)\to \H o(\D)$ is an adjoint equivalence of categories.
\vskip5pt

$(3)$ \ If $F(f)$ is a weak equivalence for a map $f$ in $\C_c$, so is $f$. And the map $F(Q(G(Y)))\stackrel{F(p_{_{G(Y)}})}\To FG(Y)\stackrel{\varepsilon_{_Y}}\To Y$ is a weak equivalence for every $Y\in \D_f$.

\vskip5pt
$(4)$ \ If $G(g)$ is a weak equivalence for a map $g$ in $\D_f$, so is $g$. And the map $X \stackrel{\eta_{_X}}\To GF(X)\stackrel{G(r_{_{F(X)}})}\To G(R(F(X)))$ is a weak equivalence for every $X\in \C_c$.
\end{proposition}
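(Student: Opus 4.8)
The plan is to deduce every statement from the behaviour of the adjunction on the homotopy categories, using three standard inputs. First, the derived functors are computed by $\mathbb{L}F(X)=F(Q(X))$ and $\mathbb{R}G(Y)=G(R(Y))$, and the induced adjunction $(\mathbb{L}F,\mathbb{R}G)$ has unit and counit represented, on cofibrant and on fibrant objects respectively, by the two concrete composites appearing in $(4)$ and in $(3)$. Second, \emph{Ken Brown's lemma}: a left Quillen functor preserves weak equivalences between cofibrant objects, and a right Quillen functor preserves weak equivalences between fibrant objects. Third, a morphism between bifibrant objects lies in $\W e(\C)$ if and only if its image in $\Ho(\C)$ is an isomorphism. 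I would then prove $(1)\Leftrightarrow(3)$, its mirror image $(1)\Leftrightarrow(4)$, and $(1)\Leftrightarrow(2)$ in turn.

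For $(1)\Rightarrow(3)$ the counit clause is almost immediate: the composite in $(3)$ is $\varphi^{-1}(p_{_{G(Y)}})=\varepsilon_{_Y}\circ F(p_{_{G(Y)}})$, so its $\varphi$-adjoint is the weak equivalence $p_{_{G(Y)}}$; since $Q(G(Y))\in\C_c$ and $Y\in\D_f$, the definition of Quillen equivalence (Definition 2.1, applied in the direction ``$\varphi(f)\in\W e(\C)\Rightarrow f\in\W e(\D)$'') makes it a weak equivalence. An entirely analogous argument, now using the reverse implication and $r_{_{F(X)}}$ in place of $p_{_{G(Y)}}$, shows that the composite $u_{_X}$ in $(4)$ lies in $\W e(\C)$ for every $X\in\C_c$. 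For the reflection clause, let $f\colon A\to B$ with $A,B\in\C_c$ and $F(f)\in\W e(\D)$; then $F(f)$ is a weak equivalence between cofibrant objects, so $R(F(f))$, and hence $G(R(F(f)))$, are weak equivalences by Ken Brown's lemma, and naturality of $u$ gives $u_{_B}\circ f=G(R(F(f)))\circ u_{_A}$; as $u_{_A},u_{_B}$ and $G(R(F(f)))$ are weak equivalences, two-out-of-three forces $f\in\W e(\C)$.

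For $(3)\Rightarrow(1)$ fix $X\in\C_c$, $Y\in\D_f$ and $f\colon F(X)\to Y$ with adjoint $\widetilde f=\varphi(f)\colon X\to G(Y)$, so $f=\varepsilon_{_Y}\circ F(\widetilde f)$. Taking $p_{_{G(Y)}}$ to be an acyclic fibration and lifting $\widetilde f$ through it (possible since $X$ is cofibrant) yields $\widehat f\colon X\to Q(G(Y))$ with $p_{_{G(Y)}}\circ\widehat f=\widetilde f$, whence $\widetilde f\in\W e(\C)$ iff $\widehat f\in\W e(\C)$ by two-out-of-three. On the other side $f=\big(\varepsilon_{_Y}\circ F(p_{_{G(Y)}})\big)\circ F(\widehat f)$, and the bracketed map is the weak equivalence from $(3)$, so $f\in\W e(\D)$ iff $F(\widehat f)\in\W e(\D)$. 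Since $\widehat f$ is a map between cofibrant objects, Ken Brown's lemma together with the reflection clause of $(3)$ gives $F(\widehat f)\in\W e(\D)\Leftrightarrow\widehat f\in\W e(\C)$, and chaining the three equivalences yields $f\in\W e(\D)\Leftrightarrow\widetilde f\in\W e(\C)$, i.e.\ $(1)$. The equivalence $(1)\Leftrightarrow(4)$ is exactly dual, interchanging cofibrant with fibrant, left Quillen with right Quillen, and $Q,\eta$ with $R,\varepsilon$.

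Finally, $(1)\Leftrightarrow(2)$ follows by reading this through the third input: an adjunction is an adjoint equivalence precisely when its unit and counit are natural isomorphisms; by the first input these are the images in $\Ho(\C)$ and $\Ho(\D)$ of the composites in $(4)$ and $(3)$, and by the third input they are isomorphisms exactly when those composites are weak equivalences for all $X\in\C_c$ and $Y\in\D_f$. The computations above show that this last condition is equivalent to $(1)$, the reflection clause of $(3)$ being recovered from the unit composites via Ken Brown's lemma, as in $(1)\Rightarrow(3)$. I expect the main obstacle to be the first input: pinning down the abstractly defined unit and counit of $(\mathbb{L}F,\mathbb{R}G)$ as precisely these two composites, and tracking which objects are cofibrant, fibrant, or bifibrant so that the third input and Ken Brown's lemma apply without an extra replacement. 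Once this bookkeeping is in place, every remaining step is a two-out-of-three argument fed by a single lifting.
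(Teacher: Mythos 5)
The paper itself offers no proof of this proposition---it simply cites \cite[Proposition 1.3.13, Corollary 1.3.16]{Hovey99}---so the only meaningful comparison is against that source, and your argument is, in substance, Hovey's: $(1)\Leftrightarrow(3)$ and $(1)\Leftrightarrow(4)$ by lifting through the (co)fibrant approximation, Ken Brown's lemma, and two-out-of-three; $(1)\Leftrightarrow(2)$ by identifying the derived unit and counit with the two displayed composites. The $(1)\Leftrightarrow(3)\Leftrightarrow(4)$ portion is complete and correct as written, up to one harmless point: ``naturality of $u$'' presupposes a functorial fibrant replacement, which is built into Hovey's definition of a model category but not into Quillen's closed model categories used elsewhere in this paper. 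Without functoriality you must construct $R(F(f))$ by hand, lifting $r_{_{F(B)}}\circ F(f)$ against the acyclic cofibration $r_{_{F(A)}}$ into the fibration $R(F(B))\to *$; the resulting square commutes on the nose, $R(F(f))$ is a weak equivalence by two-out-of-three (Ken Brown is not needed for this step, only for applying $G$ afterwards), and your argument goes through unchanged.

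The one genuine gap is in $(1)\Leftrightarrow(2)$: your third input is stated only for maps between \emph{bifibrant} objects, but the maps you apply it to are not of that form. The derived unit at a cofibrant $X$ is represented by $u_{_X}\colon X\to G(R(F(X)))$, whose source is only cofibrant and whose target is only fibrant; the counit composite $F(Q(G(Y)))\to Y$ has the dual defect. So ``isomorphism in $\H o$ $\Leftrightarrow$ weak equivalence'' does not follow from the input as you stated it. The repair is standard and short: either invoke the full saturation theorem---a map in a model category is a weak equivalence if and only if its image in the homotopy category is an isomorphism, \cite[Theorem 1.2.10]{Hovey99}---or keep the bifibrant version and insert two further replacements (extend $u_{_X}$ along the acyclic cofibration $r_{_X}\colon X\to R(X)$ into the fibrant target, then lift the result through the acyclic fibration $Q(G(R(F(X))))\to G(R(F(X)))$, reducing to a map between bifibrant objects, and finish with two-out-of-three). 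With that single strengthening your proof is complete, and it follows the same route as the source the paper cites.
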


\subsection{The model (co)slice categories}

\begin{definition} \ Let $\C$ be a category. For an object $X$ in $\C$, the {\it coslice category} $(X\downarrow \C)$ is the category in which an object is a map $X\stackrel{f}\to C$ in $\C$, and a map from $X\stackrel{f}\to C$ to $X\stackrel{f'}\to C'$ is a map $\alpha:C\to C'$ such that $f'=\alpha\circ f$. The composition of maps is defined by the composition of maps in $\C$.
\end{definition}

Dually, we define the {\it slice category} $(\C\downarrow X)$ over $X$.
\vskip10pt

 Now let $\C$ be a closed model category. If we define a map in $(X\downarrow \C)$ and $(\C\downarrow X)$ is weak equivalence, cofibration, or fibration if it is one in $\C$, then both the coslice category $(X\downarrow \C)$ and the slice category $(\C\downarrow X)$ become closed model categories; see \cite[Theorem 7.6.5]{Hirschhorn03}.
\vskip10pt
\emph{From now on, when we talk about model coslice and slice categories, we always mean that their model structures are given as above.}
\vskip10pt
\begin{lemma} \ Let $\C$ be a model category. Then
\vskip5pt
$(1)$ \ $(X\downarrow \C)_c=\{u\in (X\downarrow \C)\ | \ u\in \C of(\C)\}$ and $(X\downarrow \C)_f=\{ X\stackrel{u}\to C \in (X\downarrow \C)\ | \ C\in \C_f\}$.
\vskip5pt
$(2)$ \ $(\C\downarrow X)_c=\{ C\stackrel{u}\to X \in (\C\downarrow X)\ |\ C\in \C_c\}$ and
$(\C \downarrow X)_f=\{u\in (\C\downarrow X) |\ u \in \mathcal{F}ib(\C)\}$.
\end{lemma}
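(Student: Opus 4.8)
The plan is to unwind the definitions of cofibrant and fibrant objects in the (co)slice categories, the key point being to identify explicitly the relevant initial and terminal objects together with the canonical maps to and from them. I will carry out part (1) in detail and obtain part (2) by the dual argument.

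First I would pin down the initial and terminal objects of $(X\downarrow \C)$. The identity $\id_X\colon X\to X$ is the initial object: for any object $f\colon X\to C$, a morphism from $\id_X$ to $f$ in $(X\downarrow \C)$ is a map $\alpha\colon X\to C$ with $f=\alpha\circ\id_X$, which forces $\alpha=f$, so there is a unique such morphism. Dually, writing $*$ for the terminal object of $\C$, the object $X\to *$ is terminal in $(X\downarrow \C)$, since a morphism into it has underlying map $C\to *$, and there is exactly one such map in $\C$ with the compatibility condition automatic. With these identifications in hand, the definition recalled in Section~2 says that an object $u\colon X\to C$ is cofibrant in $(X\downarrow \C)$ exactly when the unique morphism from the initial object $\id_X$ to $u$ is a cofibration. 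But that morphism is precisely $u\colon X\to C$ regarded as a map of $(X\downarrow \C)$, and by the very construction of the coslice model structure a morphism there is a cofibration if and only if its underlying map in $\C$ is. Hence $u$ is cofibrant if and only if $u\in\C of(\C)$, which is the first assertion of (1). Likewise, $u\colon X\to C$ is fibrant if and only if the unique morphism from $u$ to the terminal object $X\to *$ is a fibration; this morphism has underlying map $C\to *$, so $u$ is fibrant if and only if $C\to *\in\F ib(\C)$, i.e. $C\in\C_f$, giving the second assertion of (1).

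Part (2) then follows by the exactly dual analysis in $(\C\downarrow X)$: here the initial object is $\emptyset\to X$ and the terminal object is $\id_X\colon X\to X$. The canonical morphism from the initial object to $u\colon C\to X$ has underlying map $\emptyset\to C$, so $u$ is cofibrant iff $\emptyset\to C\in\C of(\C)$, i.e. $C\in\C_c$; and the canonical morphism from $u$ to the terminal object has underlying map $u$ itself, so $u$ is fibrant iff $u\in\F ib(\C)$. I do not anticipate a substantive obstacle here, since once the (co)slice model structure transports (co)fibrations verbatim from $\C$ the whole statement is immediate. The only place demanding care is the bookkeeping of the first step: correctly recognizing that the initial object of $(X\downarrow \C)$ is $\id_X$ (not $\emptyset\to X$) and dually for the slice, and checking that the canonical maps to and from these are literally $u$ and the structure map as claimed. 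Getting those identifications right is what makes the underlying maps come out as asserted.
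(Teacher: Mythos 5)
Your proof is correct and follows exactly the paper's own approach: identifying the initial object $X\stackrel{1_X}\to X$ and terminal object $X\to *$ of $(X\downarrow \C)$ (and dually $\emptyset\to X$ and $1_X$ for the slice) and then unwinding the definitions of cofibrant and fibrant objects, which is precisely what the paper's proof sketch indicates. You have simply supplied the routine verifications that the paper leaves to the reader.
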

\begin{proof} \ $(1)$.  Note that, the initial object in $(X\downarrow \C)$ is $X\stackrel{1_X}\to X$ and the terminal object is $X\to *$. From these the statement $(1)$ can be verified directly. The proof of assertion $(2)$ is dually.
\end{proof}

\subsection{Quillen adjunctions between (co-)slice categories}
Now let $\C$ be a bicomplete category and let $g: X\to Y$ be a map in $\C$. Then there are adjunctions $(g_!, g^*): (X\downarrow \C)\to (Y\downarrow \C)$ and $(g_*, g^!): (\C\downarrow X)\to (\C\downarrow Y)$; see \cite[Lemma 7.6.6]{Hirschhorn03}. Where $g_!$ takes the object $X\to C$ to its cobase change along $g$, and $g^*$ takes the object $Y\to D$ to its composition with $g$. Dually, $g_*$ takes the object $C\to X$ to its composition with $g$, and $g^!$ takes the object $D\to Y$ to its base change along $g$. In particular, if we take $X=\emptyset$ the initial object of $\C$, then $g^*$ is just the forgetful functor from $(Y\downarrow\C)$ to $\C=(\emptyset\downarrow \C)$.

If we already have an adjunction $(S, U; \varphi, \eta, \varepsilon): \C\to \D $ between the categories $\C$ and $\D$. Then for any object $X\in \C$ and $Y\in \D$, $(S, U)$ induces the following two adjunctions:
\vskip5pt
$\bullet$ \ $\overline{S}: (X\downarrow \C)\to (S(X)\downarrow \D)$ with $u\mapsto S(u)$ and $f:u\to u'\mapsto S(f)$; $\overline{U}: (S(X)\downarrow \D)\to (X\downarrow \C)$ with $S(X)\stackrel{u}\to D\mapsto X\stackrel{\eta_{_X}}\to US(X)\stackrel{U(u)}\to U(D)$ and $f:u\to u'\mapsto U(f)$.

\vskip5pt
$\bullet$ \ $\widetilde{S}: (\C \downarrow U(Y))\to (\D \downarrow Y)$ with $C\to U(Y)\mapsto S(C)\to SU(Y)\stackrel{\epsilon_{_Y}}\to Y$ and $f:u\to u'\mapsto S(f)$; $\widetilde{U}: (\D\downarrow Y)\to (\C \downarrow U(Y))$ with $u \mapsto U(u)$ and $f:u\to u'\mapsto U(f)$.

\vskip10pt

With above notations, we have the following proposition, part of which in some special case can be found in \cite[Chapter 1, Section 3]{Hovey99} and \cite[Chapter 16, Section 2]{May/Ponto12}.

\begin{proposition} \ $(1)$ \ Let $\C$ be a model category and $g: X\to Y$ a map in $\C$. Then
\vskip5pt
$(i)$ \  $(g_!, g^*): (X\downarrow \C)\to (Y\downarrow \C)$ is a Quillen adjunction.
\vskip5pt
$(ii)$ \ $(g_*, g^!): (\C\downarrow X)\to (\C\downarrow Y)$ is a Quillen adjunction.

\vskip5pt
$(2)$ \ Let $\C$ and $\D$ be two model categories. If $(S, U):\C\to \D$ is a Quillen adjunction and $X\in \C, Y\in\D$, then

\vskip5pt
$(i)$ \  $(\overline{S}, \overline{U}): (X\downarrow \C)\to (S(X)\downarrow \D)$ is a Quillen adjunction.
\vskip5pt
$(ii)$ \  $(\widetilde{S}, \widetilde{U}): (\C\downarrow U(Y))\to (\D\downarrow Y)$ is a Quillen adjunction.

\end{proposition}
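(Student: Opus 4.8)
The plan is to reduce everything to the single observation, recorded immediately before the proposition, that in both $(X\downarrow \C)$ and $(\C\downarrow X)$ a morphism is a cofibration, fibration, or weak equivalence exactly when its underlying morphism in $\C$ is one. Equivalently, the forgetful functor sending an object of $(X\downarrow \C)$ to its target (and a morphism to itself as a map in $\C$), and dually the forgetful functor on $(\C\downarrow X)$ sending an object to its source, create all three distinguished classes. Hence, to show that any of the four adjunctions is a Quillen adjunction, I only need to trace what the relevant functor does to underlying maps and then invoke the appropriate closure property.

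For part $(1)$ I would verify the right adjoints, since these are the simplest to analyze. The functor $g^*$ sends an object $Y\to D$ to the composite $X\xrightarrow{g} Y\to D$ and sends a morphism $\alpha\colon D\to D'$ to the very same map $\alpha$, now viewed as a morphism in $(X\downarrow \C)$; thus $g^*$ is literally the identity on underlying maps in $\C$. Because fibrations and acyclic fibrations in both coslice categories are detected on underlying maps, $g^*$ automatically preserves them, which gives $(i)$. Dually, the left adjoint $g_*$ in $(ii)$ leaves underlying maps unchanged, so it preserves cofibrations and acyclic cofibrations; hence both are Quillen adjunctions.

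For part $(2)$ I would instead feed in the hypothesis that $(S,U)$ is a Quillen adjunction $\C\to \D$, so that $S$ preserves cofibrations and acyclic cofibrations (equivalently $U$ preserves fibrations and acyclic fibrations). By construction $\overline{S}$ acts on a morphism $f$ by $f\mapsto S(f)$, and likewise $\widetilde{S}$ applies $S$ on underlying maps. Since cofibrations and acyclic cofibrations in the (co-)slice categories are precisely the maps whose underlying map lies in the corresponding class of $\C$ or $\D$, and since $S$ preserves these classes, $\overline{S}$ and $\widetilde{S}$ preserve cofibrations and acyclic cofibrations. This proves $(i)$ and $(ii)$; equivalently one may check that $\overline{U}$ and $\widetilde{U}$ preserve fibrations and acyclic fibrations using that $U$ does.

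I do not expect a genuine analytic obstacle here: the content is entirely bookkeeping about underlying maps. The only point that needs a moment's care is confirming that each functor really does commute with the relevant forgetful functor on morphisms — in particular that $g^*$ and $g_*$ genuinely leave the underlying map untouched, and that after applying $S$ or $U$ the new structure maps still compose correctly (which follows from the naturality of the unit $\eta$ and counit $\varepsilon$, and is part of the construction of the adjunctions). Once this is observed, the four statements follow immediately.
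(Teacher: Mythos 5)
Your proof is correct, but it takes a cleaner route than the one the paper sketches. The paper's proof is only a hint (``the definition of Quillen adjunction and the pushout, pullback axioms of model categories''), which indicates verifying the adjoint that involves (co)base change: for $(1)(i)$ one would show directly that $g_!$ preserves cofibrations and acyclic cofibrations, using that these classes are stable under cobase change (and dually for $g^!$ via pullbacks). You instead always check the adjoint on the ``easy'' side --- $g^*$ and $g_*$, which are literally the identity on underlying maps, and $\overline{S}$, $\widetilde{S}$, which act by $S$ on underlying maps --- so that everything follows immediately from the fact that the (co)slice model structures are created by the forgetful functors, together with the equivalence (cited in the paper as \cite[Definition 1.3.1, Lemma 1.3.4]{Hovey99}) that an adjunction is Quillen iff the left adjoint preserves cofibrations and acyclic cofibrations iff the right adjoint preserves fibrations and acyclic fibrations. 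What your approach buys is that you never need stability of (acyclic) cofibrations or fibrations under cobase or base change at all; what the paper's hinted approach buys is that it works without appealing to Hovey's ``one adjoint suffices'' lemma, checking the functor named first in each pair directly. Both are complete, and your bookkeeping of how each functor acts on underlying morphisms (including the remark that $\widetilde{S}$ still lands in well-defined morphisms over $Y$ by naturality of the counit) is exactly the content the paper leaves to the reader.
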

\begin{proof} \ These two statements can be proved by the definition of Quillen adjunction and the pushout, pullback axioms of model categories. We leave the details to the reader. \end{proof}

Assume now that $\C$ and $\D$ are two closed model categories and denote by $\C_*=(*\downarrow \C)$ and $\D_*=(*\downarrow \D)$ the slice categories of $\C$ and $\D$ induced by the terminal object $*$, respectively. If $(S,U):\C\to \D$ is a Quillen adjunction, M. Hovey has constructed a functor $U_*$ from $\D_*\to \C_*$ by mapping object $*\stackrel{v}\to D$ to $U(*)=*\stackrel{U(v)}\to U(X)$ \cite{Hovey99}. He has shown that this functor is a right Quillen functor. Note that if we denote by the map $S(*)\to *$ as $g$ in $\D_*$, then $ U_*$ is the composition the functors $\D_*\stackrel{g^*}\to (S(*)\downarrow \D)\stackrel{\overline{U}}\to \C_*$. So by Proposition 2.5, this functor has a left adjoint $S_*=\overline{S}\circ g_!$, then we can get Proposition 1.3.5 of \cite{Hovey99} directly.

\begin{corollary} \cite[Proposition 1.3.5]{Hovey99}\ A Quillen adjunction $(S, U): \C\to \D$ induces a Quillen adjunction $(S_*, U_*): \C_*\to \D_*$.
\end{corollary}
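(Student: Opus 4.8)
The plan is to avoid any fresh model-categorical computation by recognizing $U_*$ as a composite of two functors that Proposition 2.5 has already shown to be right Quillen. Writing $g\colon S(*)\to *$ for the unique map to the terminal object of $\D$, I would first check the factorization
$$\D_*=(*\downarrow\D)\xrightarrow{\,g^*\,}(S(*)\downarrow\D)\xrightarrow{\,\overline{U}\,}(*\downarrow\C)=\C_*,$$
where $g^*$ is the composition-with-$g$ functor from Proposition 2.5(1)(i) and $\overline{U}$ is the functor from Proposition 2.5(2)(i) specialized to $X=*$.

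To establish this identification I would argue as follows. Since $U$ is a right adjoint it preserves the terminal object, so $U(*)=*$ and $U_*$ is well defined, sending $*\xrightarrow{v}D$ to $*=U(*)\xrightarrow{U(v)}U(D)$. Running an object $*\xrightarrow{v}D$ of $\D_*$ through the composite yields $g^*(v)=v\circ g\colon S(*)\to D$, and then $\overline{U}$ returns $*\xrightarrow{\eta_*}US(*)\xrightarrow{U(v\circ g)}U(D)$. The key identity is that $U(v\circ g)\circ\eta_*=U(v)\circ\bigl(U(g)\circ\eta_*\bigr)$ with $U(g)\circ\eta_*\colon *\to *$ forced to be $\id_*$ by terminality of $*$; this collapses the composite to $*\xrightarrow{U(v)}U(D)=U_*(v)$. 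On morphisms both functors act by applying $U$, so they agree there too. This triangle-identity check is the only non-formal ingredient, and I expect it to be the main (though modest) obstacle, since everything hinges on $U$ preserving $*$ and on the unit behaving correctly over the terminal object.

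Granting the factorization, the conclusion is purely formal. By Proposition 2.5(1)(i) the adjunction $(g_!,g^*)$ is Quillen, hence $g^*$ is right Quillen; by Proposition 2.5(2)(i) the adjunction $(\overline{S},\overline{U})$ is Quillen, hence $\overline{U}$ is right Quillen. A composite of right Quillen functors is again right Quillen, because preservation of fibrations and acyclic fibrations is stable under composition; dually, a composite of left Quillen functors is left Quillen. Therefore $U_*=\overline{U}\circ g^*$ is right Quillen, its left adjoint is the composite $S_*=g_!\circ\overline{S}$ of the two left adjoints and is left Quillen, and so $(S_*,U_*)\colon\C_*\to\D_*$ is a Quillen adjunction, as claimed.
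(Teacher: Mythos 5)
Your proposal is correct and follows exactly the paper's own route: the paper likewise writes $g\colon S(*)\to *$ and factors $U_*$ as $\D_*\stackrel{g^*}\to (S(*)\downarrow\D)\stackrel{\overline{U}}\to\C_*$, then invokes Proposition 2.5 to conclude both factors are right Quillen and the left adjoint is the composite of $\overline{S}$ and $g_!$. The only difference is that you spell out the verification of the factorization (via terminality of $*$ and the unit), which the paper leaves implicit.
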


\vskip10pt
\section{Main results }

 In this section we will characterize when the various Quillen adjunctions in Proposition 2.5 are Quillen equivalences.

\begin{proposition}  \ If $\C$ is a closed model category and $g:X\to Y$ a map in $\C$, then
\vskip5pt
$(i)$ \ $(g_!, g^*): (X\downarrow \C)\to (Y\downarrow \C)$ is a Quillen equivalence if and only if the cobase change of $g$ along $u$ for each $u\in (X\downarrow \C)_c$ is a weak equivalence.
\vskip5pt
$(ii)$ \ $(g_*, g^!): (\C \downarrow X)\to (\C \downarrow Y)$ is a Quillen equivalence if and only if the base change of $g$ along $u$ for each $u\in (\C\downarrow Y)_f$ is a weak equivalence.
\vskip5pt
$(2)$ \ Assume that $\C$ and $\D$ are closed model categories and $(S,U):\C\to \D$ a Quillen equivalence. Then given any object $X\in \C$ and $Y\in \D$,
\vskip5pt
$(i)$ \ if $X\in \C_c$, the adjunction $(\overline{S}, \overline{U}):(X\downarrow\C)\to (S(X)\downarrow \D)$ is a Quillen equivalence;
\vskip5pt
$(ii)$ \ if $Y\in \D_f$, the adjunction  $(\widetilde{S}, \widetilde{U}):(\C\downarrow U(Y))\to (\D\downarrow Y)$ is a Quillen equivalence.
\end{proposition}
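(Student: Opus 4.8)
The plan is to test each statement against Proposition 2.3, applying criterion $(4)$ to the assertions whose nontrivial functor is a left adjoint out of a coslice category, and the dual criterion $(3)$ to those whose nontrivial functor is a right adjoint into a slice category. The common simplification is that the ``forgetful'' half of each adjunction preserves and reflects weak equivalences: weak equivalences in a model (co)slice category are by definition the weak equivalences of the underlying $\C$-maps, and the functors $g^*$, $g_*$, $\overline{U}$, $\widetilde{S}$ all leave the underlying map essentially unchanged (up to applying $U$ or $S$). Consequently the reflection hypothesis of Proposition 2.3 is either automatic or inherited from $(S,U)$, and the entire content collapses to identifying the relevant unit or counit.

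For $(1)(i)$ I would apply Proposition 2.3$(4)$ to $(g_!, g^*)$. Since $g^*$ acts as the identity on underlying morphisms it reflects weak equivalences, so the first clause holds for free. For the second clause let $u\colon X\to C$ lie in $(X\downarrow\C)_c$; tracing the adjunction through the pushout defining $g_!$ shows that the unit $\eta_u\colon u\to g^*g_!(u)$ has underlying map the cobase change $C\to C\sqcup_X Y$ of $g$ along $u$, while by Lemma 2.4$(1)$ the object $R(g_!(u))$ comes from a fibrant approximation $C\sqcup_X Y\to R$ in $\C$, so the remaining leg $g^*g_!(u)\to g^*(R(g_!(u)))$ is already a weak equivalence. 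By the $2$-out-of-$3$ axiom the composite in Proposition 2.3$(4)$ is a weak equivalence exactly when the cobase change of $g$ along $u$ is, which is the stated condition. Assertion $(1)(ii)$ is the formal dual, via Proposition 2.3$(3)$ applied to $(g_*, g^!)$: now $g_*$ reflects weak equivalences, the counit $\varepsilon_v\colon g_*g^!(v)\to v$ is the base-change map $X\times_Y D\to D$ of $g$ along $v\colon D\to Y$, and $Q(g^!(v))$ is built from a cofibrant approximation in $\C$, so $2$-out-of-$3$ again reduces the counit clause to the asserted base-change condition.

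For $(2)(i)$ I would use Proposition 2.3$(4)$ for $(\overline{S},\overline{U})$. The reflection clause is not automatic here, but a map between fibrant objects of $(S(X)\downarrow\D)$ is by Lemma 2.4$(1)$ a map $\beta\colon D\to D'$ with $D,D'\in\D_f$, and $\overline{U}(\beta)=U(\beta)$; so this clause is precisely the reflection clause of Proposition 2.3$(4)$ for $(S,U)$. For the unit clause take $u\colon X\to C$ in $(X\downarrow\C)_c$: here the hypothesis $X\in\C_c$ enters, forcing $C\in\C_c$ since $u$ is a cofibration out of a cofibrant object. By naturality of $\eta$ and Lemma 2.4$(1)$, the Proposition 2.3$(4)$ composite for $(\overline{S},\overline{U})$ has underlying map $C\xrightarrow{\eta_C}US(C)\xrightarrow{U(r_{S(C)})}UR(S(C))$, which is exactly the unit expression of Proposition 2.3$(4)$ for $(S,U)$ evaluated at $C\in\C_c$, hence a weak equivalence. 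Assertion $(2)(ii)$ is dual, via Proposition 2.3$(3)$ for $(\widetilde{S},\widetilde{U})$: by Lemma 2.4$(2)$ its reflection clause reduces to that of $S$ on $\C_c$, the hypothesis $Y\in\D_f$ forces $D\in\D_f$ for any fibration $v\colon D\to Y$, and by naturality of $\varepsilon$ the counit composite has underlying map $SQ(U(D))\xrightarrow{S(p_{U(D)})}SU(D)\xrightarrow{\varepsilon_D}D$, i.e.\ the counit expression of Proposition 2.3$(3)$ for $(S,U)$ at $D\in\D_f$.

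The main obstacle is exactly the bookkeeping behind these two identifications --- that the induced unit (resp.\ counit) is on underlying maps the (co)base change of $g$ in part $(1)$, and that it reproduces the unit (resp.\ counit) expression of $(S,U)$ in part $(2)$. Establishing them requires carefully chasing the adjunction isomorphisms through the pushout/pullback definitions of $g_!$ and $g^!$ and through the naturality squares for $\eta$ and $\varepsilon$, together with the explicit descriptions of $Q$ and $R$ in the (co)slice categories supplied by Lemma 2.4. Once these formulas are pinned down, each assertion follows from Proposition 2.3 by a single use of the $2$-out-of-$3$ axiom.
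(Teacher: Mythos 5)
Your proposal is correct and takes essentially the same route as the paper: identify the unit (resp.\ counit) of each adjunction with the (co)base change or with the unit/counit data of $(S,U)$, use Lemma 2.4 to see that the fibrant (resp.\ cofibrant) approximation leg is a weak equivalence that the forgetful-type functor leaves unchanged, and conclude by the 2-out-of-3 axiom via the criterion you cite as Proposition 2.3 (it is Proposition 2.2 in the paper's numbering). The paper writes out only case (1)(i) and dismisses the remaining cases as ``similar''; your explicit dual arguments for (1)(ii) and your reduction of the reflection and unit/counit clauses in (2) to the corresponding clauses for $(S,U)$ are precisely the intended completion of that remark.
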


\begin{proof} \ For the proof of statement $(i)$ of $(1)$, recall that $(X\downarrow \C)_c=\{ u\in (X\downarrow \C) \ | \ u\in \C of(\C)\}$ and $(Y\downarrow\C)_f=\{Y\stackrel{u}\to C\in (Y\downarrow \C) \ |\ C\in \C_f \}$. By the construction of $g_!$, the unit $\eta_{u}: u\to g^*g_!(u)$ is the cobase change of $g$ along $u$ for any $u\in (X\downarrow \C)$. Since $g^*(f)=f$, by Proposition 2.2, $(g_!, g^*)$ is a Quillen equivalence if and only if the composite $$u\stackrel{\eta_u}\To g^*g_!(u)\stackrel{g^*(r_{_{g_!(u)}})}\To g^*(R(g_!(u))) $$
is a weak equivalence for $u\in (X\downarrow \C)_c$. Note that $g^*(r_{_{g_!(u)}})=r_{_{g_!(u)}}$ is a weak equivalence. So by the 2-out-of-3 axiom of weak equivalences, this is equivalent to the cobase change of $g$ along $u$ is a weak equivalence.

The others can be proved similarly.
\end{proof}

\begin{corollary} \ \cite[Proposition 1.3.7]{Hovey99} \ Suppose $(S, U):\C\to \D$ is a Quillen equivalence, and suppose in addition that the terminal object $*\in \C_c$ and that $S$ preserves the terminal object. Then $(S_*, U_*): \C_*\to \D_*$ constructed as in Corollary $2.6$ is a Quillen equivalence.
\end{corollary}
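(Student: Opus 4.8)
The plan is to recognize that this corollary is nothing more than the special case $X=*$ of Proposition 3.1(2)(i), once one observes that the hypothesis that $S$ preserves the terminal object forces the comparison map to be an isomorphism. So the first step is bookkeeping: recall from the construction preceding Corollary 2.6 that, writing $g$ for the canonical map $S(*)\to *$ in $\D$, one has $U_*=\overline{U}\circ g^*$ and dually $S_*=g_!\circ\overline{S}$, where $\overline{S}$ and $\overline{U}$ are the functors induced by $(S,U)$ at the object $X=*$, namely $\overline{S}:(*\downarrow\C)\to(S(*)\downarrow\D)$ and $\overline{U}:(S(*)\downarrow\D)\to(*\downarrow\C)=\C_*$.

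Next I would exploit the hypothesis that $S$ preserves the terminal object. This means $S(*)$ is again terminal in $\D$, so the canonical map $g:S(*)\to *$ is an isomorphism. Consequently $g^*$ (composition with $g$) and $g_!$ (cobase change along $g$) are mutually inverse equivalences, and they identify the coslice category $(S(*)\downarrow\D)$ with $\D_*=(*\downarrow\D)$ canonically: pushout along an isomorphism returns the same object, and since $g$ is a weak equivalence these equivalences are in particular Quillen equivalences. Under this identification the induced pair $(\overline{S},\overline{U})$ becomes exactly $(S_*,U_*)$.

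With the identification in hand, the conclusion is immediate. Applying Proposition 3.1(2)(i) with $X=*$ requires precisely that $*\in\C_c$, which is the remaining standing hypothesis, and it yields that $(\overline{S},\overline{U}):(*\downarrow\C)\to(S(*)\downarrow\D)$ is a Quillen equivalence. Composing with the Quillen equivalence $(g_!,g^*)$ identifying $(S(*)\downarrow\D)$ with $\D_*$ (or simply transporting along the isomorphism of categories) shows that $(S_*,U_*):\C_*\to\D_*$ is a Quillen equivalence, as claimed.

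I do not expect a serious obstacle here, since the statement is essentially a corollary of Proposition 3.1; the only point that needs care is the verification in the second step that $g$ being an isomorphism makes $g_!$ and $g^*$ inverse equivalences compatible with the model structures, so that the reduction to the already-established case $X=*$ is legitimate. Once that identification is justified, no further homotopical argument is needed.
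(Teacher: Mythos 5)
Your proposal is correct and takes essentially the same route as the paper, which simply observes that under the hypotheses $(S_*,U_*)=(\overline{S},\overline{U})$ and invokes Proposition 3.1(2)(i) with $X=*$. The only difference is that you carefully justify the identification of $(S(*)\downarrow\D)$ with $\D_*$ when $g:S(*)\to *$ is merely an isomorphism rather than the identity, a point the paper's one-line proof leaves implicit.
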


\begin{proof} \ In this case, $(S_*, U_*)=(\overline{S}, \overline{U})$ is a Quillen equivalence by Proposition 3.1.
\end{proof}

Recall that  a closed model category $\C$ is called {\it left proper} if every cobase change of a weak equivalence along a cofibration is weak equivalence. Dually, $\C$ is called {\it right proper} if every base change of a weak equivalence along a fibration is a weak equivalence. By Proposition 3.1, we can redescribe the left or right properness of a model category by Quillen equivalences:

\begin{corollary}\footnote{This should be the right version of Proposition 16.2.4 of \cite{May/Ponto12}, there the authors claim that a closed model category $\C$ is left proper or right proper iff $(g_!,g^*)$ or $(g_*,g^!)$ is Quillen equivalence for a given map $g$. For a counter example, see Remark 3.4.} \ $(1)$ \  A closed model category $\C$ is left proper if and only if $(g_!,g^*)$ is a Quillen equivalence for every weak equivalence $g:X\to Y$.
\vskip5pt
$(2)$ \ A closed model category $\C$ is right proper if and only if $(g_*,g^!)$ is a Quillen equivalence for every weak equivalence $g:X\to Y$.

\end{corollary}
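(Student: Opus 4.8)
The plan is to read both statements directly off Proposition 3.1(1), which has already rewritten the Quillen equivalence property of $(g_!,g^*)$ and of $(g_*,g^!)$ as statements about cobase and base changes of the fixed map $g$. The only real task is to align the quantifiers: Proposition 3.1 fixes $g$ and quantifies over all cofibrant objects of the coslice category (respectively fibrant objects of the slice category), whereas properness quantifies over weak equivalences $g$ together with cofibrations (respectively fibrations). I would first recall from Lemma 2.4(1) that $(X\downarrow\C)_c$ is exactly the class of cofibrations $X\stackrel{u}\to C$ of $\C$, and from Lemma 2.4(2) that $(\C\downarrow Y)_f$ is exactly the class of fibrations $D\stackrel{u}\to Y$.

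For $(1)$, Proposition 3.1(1)(i) says that for a fixed $g:X\to Y$ the adjunction $(g_!,g^*)$ is a Quillen equivalence if and only if the cobase change of $g$ along $u$ is a weak equivalence for every cofibration $u\in(X\downarrow\C)_c$. For the forward implication I would assume $\C$ left proper and take any weak equivalence $g$; then for each cofibration $u$ the cobase change of $g$ along $u$ is the cobase change of a weak equivalence along a cofibration, hence a weak equivalence, so $(g_!,g^*)$ is a Quillen equivalence. For the converse I would assume $(g_!,g^*)$ is a Quillen equivalence for every weak equivalence $g$; given an arbitrary weak equivalence $g:X\to Y$ and cofibration $u:X\to C$, note $u\in(X\downarrow\C)_c$ and apply Proposition 3.1(1)(i) to conclude the cobase change of $g$ along $u$ is a weak equivalence, and as $g$ and $u$ range over all such pairs this is exactly left properness. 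Statement $(2)$ is then handled by the evident dualization: replace $(X\downarrow\C)_c$ by $(\C\downarrow Y)_f$, Proposition 3.1(1)(i) by 3.1(1)(ii), and cobase changes along cofibrations by base changes along fibrations.

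I do not expect a genuine obstacle here, since all the substance already sits in Proposition 3.1; the one point demanding care is precisely the restriction of $g$ to weak equivalences, which is where the version cited from \cite{May/Ponto12} in the footnote goes astray. For a general map $g$, Proposition 3.1 still characterizes when $(g_!,g^*)$ is a Quillen equivalence, but the resulting condition, namely that every cobase change of $g$ along a cofibration be a weak equivalence, is strictly stronger than left properness: it is not implied by left properness (take $g$ not a weak equivalence, where left properness says nothing), and conversely its holding for a single given map need not force left properness. Keeping the quantifier over \emph{weak equivalences} $g$ is therefore what makes both implications go through, and I would flag this as the crucial hypothesis in the writeup.
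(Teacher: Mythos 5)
Your proof is correct and follows exactly the route the paper intends: the paper states this corollary as an immediate consequence of Proposition 3.1 (with no written proof), and your argument---identifying $(X\downarrow\C)_c$ with cofibrations out of $X$ and $(\C\downarrow Y)_f$ with fibrations into $Y$ via Lemma 2.4, then aligning the quantifiers over weak equivalences $g$---is precisely the quantifier bookkeeping that justification requires. Your closing remark about why the restriction to weak equivalences $g$ is essential also matches the paper's footnote and Remark 3.4.
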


\begin{remark} \ In general, if $g$ is not a weak equivalence, even $\C$ is proper, $(g_!,g^*)$ is not necessarily a Quillen equivalence. For example, take $\C={\rm mod}{k[x]/(x^2)}$ where $k$ is a field. Then $\C$ is a proper model category in which weak equivalences are stable equivalences and every object is bifibrant. Take $g=0\to k$, then $\eta=\left(\begin{smallmatrix}
1 \\
0
\end{smallmatrix}\right)$ is the unit of the adjunction $(g_!,g^*)$. In this case every object is fibrant, the maps in $(4)$ of Proposition 2.2 is just $\eta_{_C}$ for any $C\in \C$. But $\eta_k:k\to k\oplus k$ is no way to be a weak equivalence. So the Quillen adjunction $(g_!,g^*): \C\to (k\downarrow \C)$ is not a Quillen equivalence.
\end{remark}

\vskip10pt

If $\F$ is a weakly idempotent complete Frobenius category, then $\F$ has a canonical model structure in which the cofibrations are the monomorphisms, the fibrations are the epimorphisms and the weak equivalences are the stable equivalences \cite[Theorem 3.3]{Gillespie2011}. Let $A$ be any nonzero projective-injective object in $\F$. Take $g=0:0\to A$, then $0_!(0\to C)=C\stackrel{\left(\begin{smallmatrix}
0 \\
1
\end{smallmatrix}\right)}\to A\oplus C$ is a weak equivalence for any $C\in \F$. By Proposition 3.1, we have a Quillen equivalence $(0_!, 0^*):(0\downarrow\F)=\F\to (A\downarrow \F).$  So the derived functors of $0_!$ and $0^*$ are equivalences of homotopy categories between $\H o(\F)$ and $\H o(A\downarrow \F)$. Note that in this case, the homotopy category $\H o(\F)$ is just the stable category $\underline{\F}$ \cite[Chapter I, Section 2.2]{Happel88} which is a triangulated category by Theorem 2.6 of \cite{Happel88}. If we can show that the homotopy category $\H o(A\downarrow \F)$ is a triangulated category, then the derived adjunction $(\mathbb{L}0_!, \mathbb{R}0^*):\underline{\F}\to \H o(A\downarrow \F)$ will be a triangle equivalence since Quillen equivalences are automatically triangle equivalences if the corresponding homotopy categories are triangulated categories; see \cite[Chapter I, Theorem 3]{Quillen67}.

Next we will show that the homotopy category $\H o(A\downarrow \F)$ is a triangulated category. And then we give the promised example as advertised in Introduction since the coslice category $(A\downarrow \F)$ is not pointed by noting that its initial object is $A\stackrel{1_A}\to A$ and its terminal object is $A\to 0$.

\begin{theorem} \  The homotopy category $\H o(A\downarrow \F)$ is a triangulated category.
\end{theorem}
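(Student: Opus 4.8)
The plan is to build the triangulated structure on $\Ho(A\downarrow\F)$ not intrinsically but by transporting the one already available on $\underline{\F}$ across the equivalence of homotopy categories furnished by the Quillen equivalence $(0_!,0^*)$. By the discussion preceding the theorem, $(0_!,0^*):\F\to(A\downarrow\F)$ is a Quillen equivalence, so by Proposition 2.2 the derived functors $\mathbb{L}0_!:\underline{\F}\to\Ho(A\downarrow\F)$ and $\mathbb{R}0^*:\Ho(A\downarrow\F)\to\underline{\F}$ are mutually quasi-inverse equivalences of categories. Since $\underline{\F}$ is triangulated by Happel's theorem, it then suffices to invoke the formal principle that any category equivalent to a triangulated category is itself triangulated: I would declare the shift on $\Ho(A\downarrow\F)$ to be $\mathbb{L}0_!\circ[1]\circ\mathbb{R}0^*$ and a candidate triangle to be distinguished exactly when it is isomorphic to the image under $\mathbb{L}0_!$ of a distinguished triangle of $\underline{\F}$. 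Because $\mathbb{L}0_!$ is an equivalence and $[1]$ an autoequivalence, the axioms of a triangulated category carry over verbatim; these checks are routine and I would only indicate them.

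The step I would treat with care is the additive and pointed structure, since $(A\downarrow\F)$ is \emph{not} pointed: its initial object is $A\xrightarrow{1_A}A$ and its terminal object is $A\to 0$, and these differ because $A\neq 0$. Abstractly, an equivalence preserves zero objects, so $\Ho(A\downarrow\F)\simeq\underline{\F}$ already forces a zero object to exist; the point worth making explicit is \emph{why} this is compatible with the non-pointedness downstairs. The unique morphism from the initial to the terminal object of $(A\downarrow\F)$ has underlying map $A\to 0$ in $\F$, and since $A$ is projective-injective it is a zero object of $\underline{\F}$, so $A\to 0$ is a stable equivalence. Hence this morphism becomes an isomorphism in $\Ho(A\downarrow\F)$, the initial and terminal objects are identified there, and their common image is the zero object. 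The abelian group structure on $\Hom$-sets and the bilinearity of composition are then imported along $\mathbb{R}0^*$ from $\underline{\F}$.

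The main obstacle is conceptual rather than computational. Quillen's construction of the loop and suspension functors and of the fibre/cofibre sequences that yield a triangulated homotopy category requires a pointed model category, which is precisely what $(A\downarrow\F)$ fails to be; so the triangulated structure cannot be produced inside $(A\downarrow\F)$ and must be imported through the Quillen equivalence. Once the pointedness of $\Ho(A\downarrow\F)$ is secured as above, everything else reduces to the preservation of triangulated structures under equivalences, and the derived adjunction $(\mathbb{L}0_!,\mathbb{R}0^*)$ is then automatically a triangle equivalence between $\underline{\F}$ and $\Ho(A\downarrow\F)$.
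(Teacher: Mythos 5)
Your proof is correct, but it takes a genuinely different route from the paper. You transport the triangulated structure of $\underline{\F}$ across the derived equivalence $(\mathbb{L}0_!,\mathbb{R}0^*)$ coming from the Quillen equivalence $(0_!,0^*)$; all the inputs you need (the Quillen equivalence from Proposition 3.1, the plain-category equivalence of homotopy categories from Proposition 2.2(2), Happel's theorem for $\underline{\F}$) are available and there is no circularity, since Proposition 2.2 makes no pointedness or triangulatedness assumption. Transport of structure along an equivalence is indeed routine: additivity is a property invariant under equivalence, the shift $\mathbb{L}0_!\circ\Sigma^{\F}\circ\mathbb{R}0^*$ is an autoequivalence (which is all the paper itself demands of its suspension), and the triangle axioms carry over. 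The paper instead builds the triangulation \emph{intrinsically}: it realizes $\H o(A\downarrow\F)$ as $(A\downarrow\F)_c/\sim$, observes that cofibrant objects are the split monomorphisms $A\to A\oplus C$, passes to the full subcategory $(A\downarrow\F)_c^0$ of such objects with morphisms $\left(\begin{smallmatrix}1 & 0\\ 0 & s\end{smallmatrix}\right)$ --- which \emph{does} have a zero object and is homotopy-equivalent to the whole --- constructs explicit very good cylinder objects $A\to A\oplus C\oplus I(C)$, defines the suspension by $\Sigma(u)=A\to A\oplus\Sigma^{\F}(C)$, and invokes Quillen's Propositions 5--6 of Section I.3. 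The trade-off: your argument is shorter and makes Corollary 3.6 true by construction, but the triangulation it produces is imposed by fiat, so Theorem 3.5 becomes an essentially formal consequence of the Quillen equivalence; the paper's construction shows that the distinguished triangles arise from the model structure itself (cylinder objects and cofibration sequences \`a la Quillen), which is what substantiates the claim in the introduction that the pointedness hypothesis in Quillen's theorem is not necessary. In fact your one incorrect assertion is the motivational claim that the triangulated structure ``cannot be produced inside $(A\downarrow\F)$ and must be imported through the Quillen equivalence'': producing it inside, via the zero-object subcategory $(A\downarrow\F)_c^0$, is precisely what the paper does, and is its main point. This does not affect the validity of your proof of the stated theorem.
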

\begin{proof} \  Since $\F_{cf}=\F$ and $A$ is injective, we know that $(A\downarrow \F)_f=(A\downarrow \F)$ and cofibrant objects in $(A\downarrow \F)$ are split monomorphisms in $\F$ with domain $A$. So for any object $u \in (A\downarrow \F)_c$, we may write $u$ as $A\stackrel{\left(\begin{smallmatrix}
1 \\
0
\end{smallmatrix}\right)}\to A\oplus C$ up to isomorphism. The morphisms from $u=A\stackrel{\left(\begin{smallmatrix}
1 \\
0
\end{smallmatrix}\right)}\to A\oplus C$ to $v=A\stackrel{\left(\begin{smallmatrix}
1 \\
0
\end{smallmatrix}\right)} \to A\oplus D$ are of the form $\left(\begin{smallmatrix}
1 & r\\
0 & s
\end{smallmatrix}\right)$. By Quillen's homotopy category theorem \cite[Theorem I.1]{Quillen67}, we know that the homotopy category $\H o(A\downarrow \F)$ can be realized as the quotient category $(A\downarrow \F)_c/\sim$ where $\sim$ is the homotopy relation. For details, we refer the reader to Section 1 of Chapter I of \cite{Quillen67} or Sections 4-5 of \cite{Dwyer/Spalinski95}.

Let $(A\downarrow \F)_c^0$ be the subcategory of $(A\downarrow \F)_c$ consisting of the objects of the form $A\stackrel{\left(\begin{smallmatrix}
1 \\
0
\end{smallmatrix}\right)}\to A\oplus C$ and morphisms from $u=A\stackrel{\left(\begin{smallmatrix}
1 \\
0
\end{smallmatrix}\right)}\to A\oplus C$ to $v=A\stackrel{\left(\begin{smallmatrix}
1 \\
0
\end{smallmatrix}\right)} \to A\oplus D$ are of the form $\left(\begin{smallmatrix}
1 & 0\\
0 & s
\end{smallmatrix}\right)$. The subcategory $(A\downarrow\F)_c^0$ has zero object $A\stackrel{1_A}\to A$.

We claim that the inclusion $(A\downarrow\F)_c^0\hookrightarrow (A\downarrow\F)_c$ induces an equivalence of quotient categories $(A\downarrow\F)_c^0/\sim \simeq (A\downarrow\F)_c/\sim$. Firstly, note that $A\stackrel{\left(\begin{smallmatrix}
1 \\
0\\
0
\end{smallmatrix}\right)}\to A\oplus C\oplus I(C)$ is a very good cylinder object of $u=A\stackrel{\left(\begin{smallmatrix}
1 \\
0
\end{smallmatrix}\right)}\to A\oplus C$:

\[
\xymatrix{& A\ar@/_12pt/[dl]_{\left(\begin{smallmatrix}
1 \\
0 \\
0
\end{smallmatrix}\right)} \ar[d]^{\left(\begin{smallmatrix}
1 \\
0 \\
0
\end{smallmatrix}\right)} \ar@/^12pt/[dr]^{\left(\begin{smallmatrix}
1 \\
0 
\end{smallmatrix}\right)}&\\
A\oplus C\oplus C \ar[r]^{\left(\begin{smallmatrix}
1 & 0 & 0\\
0 & 1 & 1\\
0 & i & 0
\end{smallmatrix}\right)\ \ }& A\oplus C\oplus I(C) \ar[r]^{\ \ \ \ \ \left(\begin{smallmatrix}
1 & 0 & 0\\
0 & 1 & 0
\end{smallmatrix}\right)}& A\oplus C}
\] 
where $C\stackrel{i}\to I(C)$ is an injective preenvelope of $C$ in $\F$. Given any morphism from $\left(\begin{smallmatrix}
1 & r\\
0 & s
\end{smallmatrix}\right):u=A\stackrel{\left(\begin{smallmatrix}
1 \\
0
\end{smallmatrix}\right)}\to A\oplus C\to v=A\stackrel{\left(\begin{smallmatrix}
1 \\
0
\end{smallmatrix}\right)} \to A\oplus D$, there is a morphism $r':I(C)\to A$ such that $r'i=r$ since $A$ is injective. Then $\left(\begin{smallmatrix}
1 & 0 & r'\\
0 & s & 0
\end{smallmatrix}\right)$ is a cylinder homotopy from $\left(\begin{smallmatrix}
1 & r\\
0 & s
\end{smallmatrix}\right)$ to $\left(\begin{smallmatrix}
1 & 0\\
0 & s
\end{smallmatrix}\right)$ in $(A\downarrow \F)_c$. That is $\left(\begin{smallmatrix}
1 & r\\
0 & s
\end{smallmatrix}\right)\sim \left(\begin{smallmatrix}
1 & 0\\
0 & s
\end{smallmatrix}\right)$ in $(A\downarrow \F)_c$. Meanwhile it is easy to prove that given any two morphisms $\left(\begin{smallmatrix}
1 & \\
0 & s
\end{smallmatrix}\right), \left(\begin{smallmatrix}
1 & 0\\
0 & t
\end{smallmatrix}\right):u\to v$ in $(A\downarrow\F)_c^0$, they are cylinder homotopic if and only if they cylinder homotopic in $(A\downarrow \F)_c$. So we have $(A\downarrow\F)_c/\sim\simeq (A\downarrow\F)_c^0/\sim$. In particular $\H o(A\downarrow\F)\simeq\H o((A\downarrow\F)_c^0)$.

Now we can use Quillen's Theorem I.2 of \cite{Quillen67} to the homotopy category $\H o((A\downarrow \F)_c^0)$. Recall that $A\stackrel{\left(\begin{smallmatrix}
1 \\
0\\
0
\end{smallmatrix}\right)}\to A\oplus C\oplus I(C)$ is a very good cylinder object of $u=A\stackrel{\left(\begin{smallmatrix}
1 \\
0
\end{smallmatrix}\right)}\to A\oplus C$, where $I(C)$ is an injective preenvelope of $C$. By the construction of the suspension functor of the homotopy category $\H o(A\downarrow \F)$ \cite[The proof of Theorem 2 in Chapter I]{Quillen67}, we may define $\Sigma(u)=A\stackrel{\left(\begin{smallmatrix}
1 \\
0
\end{smallmatrix}\right)}\to A\oplus \Sigma^\F(C)$. Where $\Sigma^\F$ is the suspension functor of the stable category $\underline{\F}$ which is an automorphism of $\underline{\F}$; see \cite[Chapter I, Proposition 2.2]{Happel88}.  Then it can be verified directly that the suspension functor $\Sigma$ defined as above on the homotopy category $\H o((A\downarrow \F)_c^0)$ is an auto-equivalence and thus $\H o(A\downarrow \F)$ is a triangulated categories by Proposition 5-6 in Section I.3 of \cite{Quillen67}.\end{proof}

\begin{corollary}  The derived functor $\mathbb{L}0_!: \underline{\F}\to \H o(A\downarrow \F)$ is a triangle equivalence with quasi-inverse $\mathbb{R}0^*$.
\end{corollary}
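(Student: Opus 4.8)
The plan is to combine the Quillen equivalence already obtained with Theorem 3.5 and Quillen's functoriality result. First I would recall from the discussion preceding Theorem 3.5 that $(0_!, 0^*)\colon \F \to (A\downarrow \F)$ is a Quillen equivalence: for every $C\in \F$ the object $0_!(0\to C)=(A\to A\oplus C)$ is the cobase change of $0\colon 0\to A$ along $0\to C$ and is a weak equivalence, so Proposition 3.1 applies. By part $(2)$ of Proposition 2.2 the derived functors then form an adjoint equivalence $(\mathbb{L}0_!, \mathbb{R}0^*)\colon \H o(\F)\to \H o(A\downarrow \F)$. Since $\H o(\F)=\underline{\F}$ is triangulated by \cite[Theorem 2.6]{Happel88} and $\H o(A\downarrow \F)$ is triangulated by Theorem 3.5, it remains only to check that $\mathbb{L}0_!$ respects the triangulated structures.

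For compatibility with suspension, I would use that under the equivalence $\H o(A\downarrow \F)\simeq \H o((A\downarrow \F)_c^0)$ of Theorem 3.5 the functor $\mathbb{L}0_!$ sends $C$ to $A\to A\oplus C$, while the suspension on the target was defined there by $\Sigma(A\to A\oplus C)=(A\to A\oplus \Sigma^\F C)$. Hence $\mathbb{L}0_!(\Sigma^\F C)=(A\to A\oplus \Sigma^\F C)=\Sigma(\mathbb{L}0_! C)$, and checking naturality in $C$ yields a natural isomorphism $\mathbb{L}0_!\circ \Sigma^\F\cong \Sigma\circ \mathbb{L}0_!$. For compatibility with triangles, I would invoke the fact that both triangulated structures arise from Quillen's cofiber-sequence construction, and that $0_!$, being a left Quillen functor, preserves cofibrations and pushouts; its left derived functor therefore carries the cofiber sequences underlying the Happel triangles in $\underline{\F}$ to the cofiber sequences defining the distinguished triangles in $\H o(A\downarrow \F)$. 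This is precisely the functoriality recorded in \cite[Chapter I, Theorem 3]{Quillen67}. Together these two points show that $\mathbb{L}0_!$ is a triangle functor; being also an equivalence, it is a triangle equivalence, and its adjoint quasi-inverse $\mathbb{R}0^*$ is automatically a triangle functor as well.

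The step I expect to be the main obstacle is matching the triangles rather than the suspensions. The compatibility with $\Sigma$ is essentially engineered into the definition given in Theorem 3.5, so it is almost immediate; but verifying that the distinguished triangles of the non-pointed category $\H o(A\downarrow \F)$ --- built through the auxiliary category $(A\downarrow \F)_c^0$ --- correspond under $\mathbb{L}0_!$ to the conflation triangles of $\underline{\F}$ requires tracing Quillen's cone construction carefully through the equivalence $(A\downarrow\F)_c^0/\!\sim\,\simeq (A\downarrow\F)_c/\!\sim$ established in the proof of Theorem 3.5.
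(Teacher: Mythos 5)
Your proposal is correct and takes essentially the same route as the paper: deduce from Proposition 3.1 that $(0_!,0^*):\F\to (A\downarrow \F)$ is a Quillen equivalence, note that $\underline{\F}$ and $\H o(A\downarrow \F)$ are triangulated (Happel and Theorem 3.5), and invoke Quillen's Theorem I.3 to conclude that the derived adjoint equivalence $(\mathbb{L}0_!,\mathbb{R}0^*)$ is a triangle equivalence. The paper's proof consists of exactly these citations; your explicit checks of the suspension compatibility $\mathbb{L}0_!\circ\Sigma^\F\cong\Sigma\circ\mathbb{L}0_!$ and of the cofiber-sequence compatibility simply unpack the details that the paper delegates to Quillen's functoriality.
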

\begin{proof} \ By construction, $(0_!, 0^*):\F\to (A\downarrow \F)$ is a Quillen equivalence. Then the derived adjunction $(\mathbb{L}0_!,\mathbb{R}0^*)$ is an equivalent adjunction and $\mathbb{L}0_!$ is a triangle equivalence by Theorem I.3 of \cite{Quillen67}.\end{proof}

\begin{remark} \ Dually, we can construct a slice category $(\F\downarrow A)$ for a nonzero projective-injective object $A$, and there is a triangle equivalence $(\mathbb{L}0_*,\mathbb{R}0^!): \underline{\F}\to \H o(\F\downarrow A)$.
\end{remark}


 \noindent{\bf Acknowledgements} \ The author would like to thank Xiao-Wu Chen, Guodong Zhou and Xiaojin Zhang for their helpful discussions and encouragements.

\vskip10pt

\vskip10pt
{\footnotesize \noindent Zhi-Wei Li \\
 School of Mathematics and Statistics, \\
 Jiangsu Normal University,
Xuzhou 221116, Jiangsu, PR China.\\
{\it E-mail address: zhiweili$\symbol{64}$jsnu.edu.cn}.}

\end{document}